\def\ps@pprintTitle{%
\let\@oddhead\@empty
\let\@evenhead\@empty
\let\@oddfoot\@empty
\let\@evenfoot\@oddfoot
}
\journal{}
\theoremstyle{plain}
\newtheorem{theorem}{Theorem}[section]
\newtheorem{lemma}[theorem]{Lemma}
\theoremstyle{definition}
\newtheorem{definition}{Definition}[section]
\theoremstyle{example}
\theoremstyle{remark}
\newtheorem{remark}{Remark}[section]
\numberwithin{equation}{section}
\begin{document}
\begin{frontmatter}
\title{Bivariate-Schurer-Stancu operators based on $(p,q)$-integers}
\author[]{Abdul Wafi}
\ead{awafi@jmi.ac.in}
\author[]{Nadeem Rao\corref{cor1}}
\ead{nadeemrao1990@gmail.com}
\address[]{Department of Mathematics, Jamia Millia Islamia, New Delhi-110 025, India}
\cortext[cor1]{Corresponding author}
\begin{abstract}
The aim of this article is to introduce a bivariate extension of Shurer-Stancu operators based on $(p,q)$-integers. We prove uniform approximation by means of Bohman-Korovkin type theorem, rate of convergence using total modulus of smoothness and degree of approximation by using second order modulus of smoothness, Peetre's K-functional, Lipschitz type class.
\end{abstract}
\begin{keyword}
Schurer-Stancu, $(p,q)$-integers, modulus of smoothness, rate of convergence.
\newline\textbf{2010 Mathematics Subject Classification 41A10, 41A25, 41A35, 41A36}
\end{keyword}
\end{frontmatter}

\section{Introduction}\label{intr}

In 1962, Schurer \cite{schurer} introduced the following generalization of the classical Bernstein operators for all non-negative integer $l$ and $n\in \mathbb{N}$
\begin{eqnarray*}
B_n^l(f;x)=\sum\limits_{k=0}^{n+l}{n+l \choose k}x^k(1-x)^{n+l-k}f\Big(\frac{k}{n}\Big),
\end{eqnarray*}
where  $f\in C[0,l+1]$ and $x\in [0,1]$.

In recent past, the applications of $q$-calculus attracted the attention of mathematicians and has an interesting impact in the research in approximation theory. It has been noticed that linear positive operators based on $q$-integers are quite effective as far as the rate of convergence is concerned. In 1987, Lupas \cite{Lupas} first defined $q$-analogue of Bernstein operators. In 1997, Philips \cite{Philips} studied other form of Bernstein-polynomials based on $q$-integers. Several extensions of $q$-linear positive operators have been studied by different researchers for instance (see \cite{vijay}). Recently, Mursaleen et al \cite{MB} added an idea based on $(p,q)$-calculus in approximation theory and gave a $(p,q)$ extension to the classical Bernstein operators. The motive of $(p,q)$-integers was to generalize various forms of $q$-oscillator algebras in physics \cite{chakar}. Several generalization of Bernstein operators were studied using $(p,q)$-analogue and their approximation properties have been investigated. For instance, $(p,q)$-Bernstein-Stancu operators \cite{BST};  $(p,q)$-Bernstein-Schurer operators \cite{BS}; $(p,q)$-Schurer-Stancu operators \cite{SS}; Chlodowski variant of $(p,q)$-Schurer-Stancu operators \cite{CSS}; $(p,q)$-Szasz operators \cite{S}; $(p,q)$-Szasz-Baskakov operators \cite{SB} were introduced and their approximation properties are studied. Motivated by the above generalizations, we present a bivariate extension of $(p,q)$-Schurer-Stancu operators in this paper.

Let $0<q<p\leq 1$. Then, $(p,q)$-integers for non negative integers $n,k$ are given by
\begin{eqnarray*}
[k]_{p,q}=\frac{p^k-q^k}{p-q}\hspace{1 cm} and\hspace{0.5 cm} [k]_{p,q}=1\hspace{1 cm} for\hspace{0.5 cm} k=0.
\end{eqnarray*}
 $(p,q)$-binomial coefficient
\begin{eqnarray*}
{n \choose k}_{p,q}=\frac{[n]_{p,q}!}{[k]_{p,q}![n-k]_{p,q}!},
\end{eqnarray*}
and $(p,q)$-binomial expansion
\begin{eqnarray*}
(ax+by)_{p,q}^{n}&=&\sum\limits_{k=0}^{n} {n \choose k}_{p,q}p^{\frac{(n-k)(n-k-1)}{2}}q^{\frac{k(k-1)}{2}}a^{n-k}b^kx^{n-k}y^k,\\
(x+y)_{p,q}^{n}&=&(x+y)(px+py)(p^2x+q^2y)...(p^{n-1}x-q^{n-1}y).
\end{eqnarray*}
\section{\textbf{Construction of $(p,q)$-Bivariate-Schurer-Stancu operators }}
Let $I=[0,l+1]$ and $(x_1,x_2)\in I\times I=[0,l+1]\times[0,l+1]$. Then, for any function $f\in C(I\times I)$ and $(n_1,n_2)\in \mathbb{N}\times \mathbb{N} $, the operators $S_{n_{12},l}^{\alpha_{12},\beta_{12}}:C(I\times I)\rightarrow C([0,1]\times [0,1] )$ is defined as follows
\begin{eqnarray}
\label{eq1}S_{n_{12},l}^{\alpha_{12},\beta_{12}}(f;p_{12},q_{12};x_1,x_2)=\sum_{\nu_1=0}^{n_1+l}\sum_{\nu_2=0}^{n_2+l}s_{n_1,l,\nu_1}^{p_1,q_1}(x_1)s_{n_2,l,\nu_2}^{p_2,q_2}(x_2)f\Bigg(\frac{p^{n_1-\nu_1}[\nu_1]_{p_1q_1}+\alpha_1}{[n_1]_{p_1q_1}+\beta_1},\frac{p^{n_2-\nu_2}[\nu_2]_{p_2q_2}+\alpha_2}{[n_2]_{p_2q_2}+\beta_2}\Bigg),
\end{eqnarray}
where $S_{n_{12},l}^{\alpha_{12},\beta_{12}}(f;p_{12},q_{12},;x_1,x_2)=S_{n_{1},n_{2},l_{1},l_{2}}^{\alpha_{1},\alpha_{2},\beta_{1},\beta_{2}}(f;p_1,q_1,p_2,q_2;x_1,x_2)$ and
\begin{eqnarray*}
 s_{n_i,l,\nu_i}^{{p_i,q_i}}(x_i)=\frac{1}{p^{\frac{(n_i+l)(n_i+l-1)}{2}}}{n_i+l \choose \nu_i}_{p_i,q_i}p^{\frac{\nu_i(\nu_i-1)}{2}}x^{\nu_i}\prod\limits_{j=0}^{n_i+l-\nu_i-1}(p_i^j-q_i^jx_i),
\end{eqnarray*}
with the conditions
\newline
(i) for any positive real number $p_i$ and $q_i$ $(i=1,2)$ such that $0<q_i<p_i\leq 1$,
\newline
(ii) for any non-negative real value of $\alpha_i$ and $\beta_i$ $(i=1,2)$ such that $0\leq\alpha_i\leq\beta_i$.
\newline

\begin{remark}
 One can find that\\
(i) if $p_i=1 (i=1,2)$, then the operators defined by \ref{eq1} reduce to $q$-Bivariate-Schurer-Stancu operators,
\begin{eqnarray*} S_{n_{12},l}^{\alpha_{12},\beta_{12}}(f;q_1,q_2;x_1,x_2)=\sum_{\nu_1=0}^{n_1+l}\sum_{\nu_2=0}^{n_2+l}s_{n_1,l,\nu_1}^{q_1}(x_1)s_{n_2,l,\nu_2}^{q_2}(x_2)f\Bigg(\frac{[\nu_1]_{q_1}+\alpha_1}{[n_1]_{q_1}+\beta_1},\frac{[\nu_2]_{q_2}+\alpha_2}{[n_2]_{q_2}+\beta_2}\Bigg),
\end{eqnarray*}
where
\begin{eqnarray*}
s_{n_i,l,\nu_i}^{q_i}(x_i)={n_i+l \choose \nu_i}_{q_i}x^{\nu_i}\prod\limits_{j=0}^{n_i+l-\nu_i-1}(1-q_i^jx_i),
\end{eqnarray*}

(ii) if $\alpha_i=\beta_i=0,(i=1,2),$ then the operators defined by \ref{eq1} reduce to $(p,q)$-Bivariate-Bernstein-Schurer operators
\begin{eqnarray*}
S_{n_{12},l}^{\alpha_{12},\beta_{12}}(f;p_{12},q_{12};x_1,x_2)=\sum_{\nu_1=0}^{n_1+l}\sum_{\nu_2=0}^{n_2+l}s_{n_1,l,\nu_1}^{p_1,q_1}(x_1)s_{n_2,l,\nu_2}^{p_2,q_2}(x_2)f\Bigg(\frac{p^{n_1-\nu_1}[\nu_1]_{p_1q_1}}{[n_1]_{p_1q_1}},\frac{p^{n_2-\nu_2}[\nu_2]_{p_2q_2}}{[n_2]_{p_2q_2}}\Bigg),
\end{eqnarray*}
and
\newline
(iii) if $l_i=0$ and $\alpha_i=\beta_i=0,(i=1,2),$ then the operators defined by \ref{eq1} reduce to $(p,q)$-Bivariate-Bernstein operators
\begin{eqnarray*}
S_{n_{12}}^{\alpha_{12},\beta_{12}}(f;p_{12},q_{12};x_1,x_2)=\sum_{\nu_1=0}^{n_1}\sum_{\nu_2=0}^{n_2}s_{n_1,\nu_1}^{p_1,q_1}(x_1)s_{n_2,\nu_2}^{p_2,q_2}(x_2)f\Bigg(\frac{p^{n_1-\nu_1}[\nu_1]_{p_1q_1}}{[n_1]_{p_1q_1}},\frac{p^{n_2-\nu_2}[\nu_2]_{p_2q_2}}{[n_2]_{p_2q_2}}\Bigg),
\end{eqnarray*}
where
\begin{eqnarray*}
 s_{n_i,\nu_i}^{{p_i,q_i}}(x_i)=\frac{1}{p^{\frac{(n_i)(n_i-1)}{2}}}{n_i \choose \nu_i}_{p_i,q_i}p^{\frac{\nu_i(\nu_i-1)}{2}}x^{\nu_i}\prod\limits_{j=0}^{n_i-\nu_i-1}(p_i^j-q_i^jx_i).
\end{eqnarray*}
\end{remark}

\begin{lemma}\label{lem1.1}
 Let $e_{i,j}=x_1^ix_2^j, 0\leq i,j \leq 2$ are the two dimensional test functions. Then, we have
 \begin{eqnarray*}
 S_{n_{12},l}^{\alpha_{12},\beta_{12}}(e_{0,0};p_{12},q_{12};x_1,x_2)&=&1,\\
 S_{n_{12},l}^{\alpha_{12},\beta_{12}}(e_{10};p_{12},q_{12};x_1,x_2)&=&\frac{[n_1+l]x_1+\alpha_1}{[n_1]+\beta_1},\\
S_{n_{12},l}^{\alpha_{12},\beta_{12}}(e_{01};p_{12},q_{12};x_1,x_2)&=&\frac{[n_2+l]x_2+\alpha_2}{[n_2]+\beta_2},\\
S_{n_{12},l}^{\alpha_{12},\beta_{12}}(e_{11};p_{12},q_{12};x_1,x_2)&=&\frac{[n_1+l]x_1+\alpha_1}{[n_1]+\beta_1}\frac{[n_2+l]x_2+\alpha_2}{[n_2]+\beta_2},\\
S_{n_{12},l}^{\alpha_{12},\beta_{12}}(e_{20};p_{12},q_{12};x_1,x_2)&=&\frac{[n_1+l](p_1^{n_1+l-1}+2\alpha_1)x_1}{([n_1]+\beta_1)^2}+\frac{q_1[n_1+l][n_1+l-1]x_1^2}{([n_1]+\beta_1)^2}+\frac{\alpha_1^2}{([n_1]+\beta_1)^2},\\
S_{n_{12},l}^{\alpha_{12},\beta_{12}}(e_{02};p_{12},q_{12};x_1,x_2)&=&\frac{[n_2+l](p_2^{n_2+l-1}+2\alpha_2)x_2}{([n_2]+\beta_2)^2}+\frac{q_2[n_2+l][n_2+l-1]x_2^2}{([n_2]+\beta_2)^2}+\frac{\alpha_2^2}{([n_2]+\beta_2)^2}.
 \end{eqnarray*}
 \end{lemma}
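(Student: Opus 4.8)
The plan is to exploit the tensor--product structure of the operators and reduce everything to three one--dimensional moment identities. Since the bivariate kernel factors as $s_{n_1,l,\nu_1}^{p_1,q_1}(x_1)\,s_{n_2,l,\nu_2}^{p_2,q_2}(x_2)$ and each test function is a product $e_{i,j}(t_1,t_2)=t_1^{\,i}t_2^{\,j}$, the double sum in \eqref{eq1} splits, so that
\begin{eqnarray*}
S_{n_{12},l}^{\alpha_{12},\beta_{12}}(e_{i,j};p_{12},q_{12};x_1,x_2)=\mathcal{A}_i^{(1)}(x_1)\,\mathcal{A}_j^{(2)}(x_2),\qquad
\mathcal{A}_m^{(r)}(x_r)=\sum_{\nu=0}^{n_r+l}s_{n_r,l,\nu}^{p_r,q_r}(x_r)\left(\frac{p^{n_r-\nu}[\nu]_{p_r,q_r}+\alpha_r}{[n_r]_{p_r,q_r}+\beta_r}\right)^{m}
\end{eqnarray*}
for $r=1,2$ and $m\in\{0,1,2\}$. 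Hence it is enough to establish $\mathcal{A}_0^{(r)}=1$, $\mathcal{A}_1^{(r)}(x_r)=\frac{[n_r+l]x_r+\alpha_r}{[n_r]+\beta_r}$, and the stated quadratic expression for $\mathcal{A}_2^{(r)}$; the six formulas of the lemma then follow by multiplying the appropriate pair (for instance $S(e_{11})=\mathcal{A}_1^{(1)}\mathcal{A}_1^{(2)}$ and $S(e_{20})=\mathcal{A}_2^{(1)}\mathcal{A}_0^{(2)}$).

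For the zeroth moment I would use the normalization $\sum_{\nu=0}^{n+l}s_{n,l,\nu}^{p,q}(x)=1$, which is the $(p,q)$--binomial expansion recalled in Section~\ref{intr} applied to the Schurer basis. For the first moment, pull the denominator out of the sum, use linearity: the $\alpha$--part collapses by the previous identity, and for the remaining sum $\sum_{\nu}s_{n,l,\nu}^{p,q}(x)\,p^{n-\nu}[\nu]_{p,q}$ apply the absorption identity $[\nu]_{p,q}{n+l\choose\nu}_{p,q}=[n+l]_{p,q}{n+l-1\choose\nu-1}_{p,q}$; the $\nu=0$ term vanishes, and the shift $\nu\mapsto\nu+1$ turns what is left into a $(p,q)$--binomial expansion of degree $n+l-1$, which after simplification of the $p$--powers reduces to $[n+l]_{p,q}x$. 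This gives the asserted $\mathcal{A}_1^{(r)}$.

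For the second moment, expanding the square reduces $([n]_{p,q}+\beta)^2\mathcal{A}_2(x)$ to $\sum_{\nu}s_{n,l,\nu}^{p,q}(x)\,p^{2(n-\nu)}[\nu]_{p,q}^2$ plus the already computed quantities $2\alpha[n+l]_{p,q}x+\alpha^2$. For the genuinely new sum I would use the recurrence $[\nu]_{p,q}=p^{\nu-1}+q[\nu-1]_{p,q}$, giving
\begin{eqnarray*}
[\nu]_{p,q}^2=p^{\nu-1}[\nu]_{p,q}+q\,[\nu]_{p,q}[\nu-1]_{p,q},
\end{eqnarray*}
so that the sum breaks into $p^{n-1}\sum_{\nu}s_{n,l,\nu}^{p,q}(x)\,p^{n-\nu}[\nu]_{p,q}=p^{n-1}[n+l]_{p,q}x$ (by the first--moment work) and, via the double absorption $[\nu]_{p,q}[\nu-1]_{p,q}{n+l\choose\nu}_{p,q}=[n+l]_{p,q}[n+l-1]_{p,q}{n+l-2\choose\nu-2}_{p,q}$ together with the shift $\nu\mapsto\nu+2$, a $(p,q)$--binomial expansion of degree $n+l-2$ contributing $q\,[n+l]_{p,q}[n+l-1]_{p,q}x^2$. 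Collecting the three terms and dividing by $([n]_{p,q}+\beta)^2$ produces exactly the stated formula for $\mathcal{A}_2^{(1)}$, and interchanging the roles of the two variables gives $\mathcal{A}_2^{(2)}$.

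The $(p,q)$--arithmetic is routine; the one delicate point is the second moment, where the Schurer shift places $n+l$ inside the basis but leaves $n$ in the denominator, so one must keep the exponents of $p$ and the factors of $q$ coming from the recurrence perfectly aligned during the re--indexing, and verify that the boundary terms $\nu=0,1$ really drop out so that each truncated sum genuinely closes up into a $(p,q)$--binomial expansion. Once the three univariate moments are in hand, the bivariate conclusion is immediate from the factorization in the first step.
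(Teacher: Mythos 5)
Your overall route coincides with the paper's: both arguments rest on the tensor--product factorization $S_{n_{12},l}^{\alpha_{12},\beta_{12}}(e_{i,j};p_{12},q_{12};x_1,x_2)=S_{n_1,l}^{\alpha_1,\beta_1}(t_1^i;p_1,q_1;x_1)\,S_{n_2,l}^{\alpha_2,\beta_2}(t_2^j;p_2,q_2;x_2)$, after which only the three univariate moments matter. The paper stops at this factorization and simply invokes the known univariate Schurer--Stancu moments, whereas you recompute them from scratch via the partition of unity, the absorption identities and the recurrence $[\nu]_{p,q}=p^{\nu-1}+q[\nu-1]_{p,q}$; that is the standard technique, it is the right one, and it makes the proof self-contained rather than a citation.

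There is, however, a concrete gap in the $p$-power bookkeeping, which is exactly the ``delicate point'' you yourself flagged. Taking the nodes literally as written in \eqref{eq1}, with the factor $p^{\,n_r-\nu}$ while the basis has degree $n_r+l$, the absorption-and-shift argument gives $\sum_{\nu}s_{n,l,\nu}^{p,q}(x)\,p^{\,n-\nu}[\nu]_{p,q}=p^{-l}[n+l]_{p,q}\,x$, not $[n+l]_{p,q}\,x$; the formulas asserted in the lemma correspond to nodes carrying $p^{\,n+l-\nu}$, so you must either correct the exponent at the outset or record this discrepancy in the definition. Moreover, even granting your first-moment claim, your second-moment splitting $[\nu]_{p,q}^2=p^{\nu-1}[\nu]_{p,q}+q[\nu]_{p,q}[\nu-1]_{p,q}$ applied to $p^{2(n-\nu)}[\nu]_{p,q}^2$ yields the linear term $p^{\,n-1}[n+l]_{p,q}\,x$, while the lemma states $p^{\,n+l-1}[n+l]_{p,q}\,x$; these agree only if $l=0$ or $p=1$, so the assertion that collecting your three terms ``produces exactly the stated formula'' does not hold as written. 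With the exponent $n+l-\nu$ in the nodes the same splitting gives $p^{2(n+l-\nu)}[\nu]_{p,q}^2=p^{\,n+l-1}\,p^{\,n+l-\nu}[\nu]_{p,q}+q\,p^{2(n+l-\nu)}[\nu]_{p,q}[\nu-1]_{p,q}$, whence $p^{\,n+l-1}[n+l]_{p,q}x+q[n+l]_{p,q}[n+l-1]_{p,q}x^2$, which is what the lemma claims; the repair is mechanical, but the exponents must be carried consistently through both moments before the bivariate conclusion can be drawn.
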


\begin{proof}
From equation \ref{eq1}, we find that
 \begin{eqnarray*}
S_{n_{12},l}^{\alpha_{12}\beta_{12}}(e_{0,0};p_{12},q_{12};x_1,x_2)&=&S_{n_{1},l_{1}}^{\alpha_{1},\beta_{1}}(e_0;p_1,q_1;x_1)S_{n_{2},l_{2}}^{\alpha_{2},\beta_{2}}(e_0;p_{2},q_{2};x_2),\\
S_{n_{12},l}^{\alpha_{12}\beta_{12}}(e_{1,0};p_{12},q_{12};x_1,x_2)&=&S_{n_{1},l_{1}}^{\alpha_{1},\beta_{1}}(x_1;p_1,q_1;x_1)S_{n_{2},l_{2}}^{\alpha_{2},\beta_{2}}(e_0;p_{2},q_{2};x_2),\\
S_{n_{12},l}^{\alpha_{12}\beta_{12}}(e_{0,1};p_{12},q_{12};x_1,x_2)&=&S_{n_{1},l_{1}}^{\alpha_{1},\beta_{1}}(e_0;p_1,q_1;x_1)S_{n_{2},l_{2}}^{\alpha_{2},\beta_{2}}(x_2;p_{2},q_{2};x_2),\\
S_{n_{12},l}^{\alpha_{12}\beta_{12}}(e_{1,1};p_{12},q_{12};x_1,x_2)&=&S_{n_{1},l_{1}}^{\alpha_{1},\beta_{1}}(x_1;p_1,q_1;x_1)S_{n_{2},l_{2}}^{\alpha_{2},\beta_{2}}(x_2;p_{2},q_{2};x_2),\\
S_{n_{12},l}^{\alpha_{12}\beta_{12}}(e_{2,0};p_{12},q_{12};x_1,x_2)&=&S_{n_{1},l_{1}}^{\alpha_{1},\beta_{1}}(x_1^2;p_1,q_1;x_1)S_{n_{2},l_{2}}^{\alpha_{2},\beta_{2}}(e_0;p_{2},q_{2};x_2),\\  S_{n_{12},l}^{\alpha_{12}\beta_{12}}(e_{0,2};p_{12},q_{12};x_1,x_2)&=&S_{n_{1},l_{1}}^{\alpha_{1},\beta_{1}}(e_0;p_1,q_1;x_1)S_{n_{2},l_{2}}^{\alpha_{2},\beta_{2}}(x_2^2;p_{2},q_{2};x_2),
 \end{eqnarray*}
using these equalities, we can simply proof the Lemma \ref{lem1.1}.
\end{proof}

\begin{lemma}\label{lem2.2}
Let $S_{n_{12},l}^{\alpha_{12},\beta_{12}}(f;p_{12},q_{12},;x_1,x_2)$ be defined by \ref{eq1}. Then, we have
\begin{eqnarray*}
S_{n_{12},l}^{\alpha_{12}\beta_{12}}(((t_1-x_1)^2);p_{12},q_{12};x_1,x_2)&=&\frac{[n_1+l][n_1+l-1]q_1-2[n_1+l]([n_1]+\beta_1)+([n_1]+\beta_1)^2}{([n_1]+\beta_1)^2}x_1^2\\
&&+\frac{[n_1+l](p_1^{n_1+l-1}+2\alpha_1)-2\alpha_1([n_1]+\beta_1)}{([n_1]+\beta_1)^2}x_1+\frac{\alpha_1^2}{(n+\beta_1)^2}.\\
S_{n_{12},l}^{\alpha_{12}\beta_{12}}(((t_2-x_2)^2);p_{12},q_{12};x_1,x_2)&=&\frac{[n_2+l][n_2+l-1]q_2-2[n_2+l]([n_2]+\beta_2)+([n_2]+\beta_2)^2}{([n_2]+\beta_2)^2}x_2^2\\
&&+\frac{[n_2+l](p_2^{n_2+l-1}+2\alpha_2)-2\alpha_2([n_2]+\beta_2)}{([n_2]+\beta_2)^2}x_2+\frac{\alpha_2^2}{(n+\beta_2)^2}.
 \end{eqnarray*}
  \end{lemma}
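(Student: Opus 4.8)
The plan is to reduce everything to the moment identities already proved in Lemma~\ref{lem1.1} by using linearity of the operator in its function argument. Since $S_{n_{12},l}^{\alpha_{12},\beta_{12}}$ is a positive linear operator acting in the variables that become $t_1,t_2$, while $(x_1,x_2)$ play the role of fixed parameters, expanding $(t_1-x_1)^2=t_1^2-2x_1t_1+x_1^2$ gives
\begin{eqnarray*}
S_{n_{12},l}^{\alpha_{12},\beta_{12}}((t_1-x_1)^2;p_{12},q_{12};x_1,x_2)&=&S_{n_{12},l}^{\alpha_{12},\beta_{12}}(e_{2,0};p_{12},q_{12};x_1,x_2)\\
&&-\,2x_1\,S_{n_{12},l}^{\alpha_{12},\beta_{12}}(e_{1,0};p_{12},q_{12};x_1,x_2)+x_1^2\,S_{n_{12},l}^{\alpha_{12},\beta_{12}}(e_{0,0};p_{12},q_{12};x_1,x_2),
\end{eqnarray*}
and the analogous identity for $(t_2-x_2)^2$ written in terms of $e_{0,2}$, $e_{0,1}$, $e_{0,0}$.

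Next I would substitute the three closed forms furnished by Lemma~\ref{lem1.1}, namely $S_{n_{12},l}^{\alpha_{12},\beta_{12}}(e_{0,0};\cdot)=1$, the linear expression for $S_{n_{12},l}^{\alpha_{12},\beta_{12}}(e_{1,0};\cdot)$, and the quadratic expression for $S_{n_{12},l}^{\alpha_{12},\beta_{12}}(e_{2,0};\cdot)$. Putting all three contributions over the common denominator $([n_1]+\beta_1)^2$ produces a single polynomial of degree two in $x_1$, so the proof is finished by collecting coefficients: the coefficient of $x_1^2$ is assembled from the quadratic term of $S(e_{2,0};\cdot)$, from $-2x_1$ times the $x_1$-part of $S(e_{1,0};\cdot)$, and from $x_1^2\cdot 1$, which yields the factor $[n_1+l][n_1+l-1]q_1-2[n_1+l]([n_1]+\beta_1)+([n_1]+\beta_1)^2$; the coefficient of $x_1$ comes from the linear term of $S(e_{2,0};\cdot)$ together with $-2x_1$ times the constant $\alpha_1/([n_1]+\beta_1)$, giving $[n_1+l](p_1^{n_1+l-1}+2\alpha_1)-2\alpha_1([n_1]+\beta_1)$; and the constant term is simply $\alpha_1^2/([n_1]+\beta_1)^2$. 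The second displayed identity follows verbatim after the substitution $1\mapsto 2$ in all indices.

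This argument is entirely routine, so I do not anticipate any real obstacle; the only points requiring care are the bookkeeping when the three fractions are merged over the common denominator and the sign matching in the collected coefficients, together with the implicit use — already exploited in the proof of Lemma~\ref{lem1.1} — of the tensor-product structure $S_{n_{12},l}^{\alpha_{12},\beta_{12}}=S_{n_1,l}^{\alpha_1,\beta_1}\otimes S_{n_2,l}^{\alpha_2,\beta_2}$, which is what allows the one-dimensional computations to be carried out independently in each variable.
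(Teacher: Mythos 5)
Your proposal is correct and follows exactly the route the paper intends: the paper's proof of Lemma~\ref{lem2.2} consists of the single remark that it follows from Lemma~\ref{lem1.1} and linearity, and your expansion of $(t_i-x_i)^2$, substitution of the moments $S(e_{0,0})$, $S(e_{i0})$ (resp.\ $e_{0i}$), $S(e_{20})$ (resp.\ $e_{02}$), and collection of coefficients over the common denominator $([n_i]+\beta_i)^2$ is just that argument written out in full, with the coefficients matching the stated formulas (note the lemma's denominators $(n+\beta_i)^2$ in the constant terms are typos for $([n_i]+\beta_i)^2$, consistent with your computation).
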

\begin{proof}
In view of Lemma \ref{lem1.1} and linearity property, it is easy to prove Lemma \ref{lem2.2}.
\end{proof}

\label{sec2}\section{Main Results}

\begin{definition}
Let $ X,Y \subset \mathbb{R}$ be any two given intervals and the set $B(X\times Y)=\{f:X\times Y\rightarrow \mathbb{R}| f$ is bounded on $X\times Y \}$. For $f\in B(X\times Y)$, let the function $\omega_{total}(f;\cdot,*):[0,\infty)\times [0,\infty)\rightarrow \mathbb{R}$, defined for any $(\delta_1,\delta_2)\in [0,\infty)\times [0,\infty)$ by
\begin{eqnarray*}
 \omega_{total}(f;\delta_1,\delta_2)=sup\{|f(x,y)-f(x',y')|:(x,y),(x',y')\in [0,\infty)\times [0,\infty), |x-x'|\leq \delta_1, |y-y'|\leq \delta_2\},
\end{eqnarray*}
is called the first order modulus of smoothness of the function $f$ or the total modulus of continuity of the function $f$.
\end{definition}

In order to get the rate of convergence and degree of approximation for the operators $S_{n_{12},l}^{\alpha_{12}\beta_{12}}$, we consider $p_i=p_{n_i}$ and $q_i=q_{n_i}$ for $i=1,2$ such that $0<q_{n_i}<p_{n_i}\leq 1$ satisfying
\begin{eqnarray}\label{eq2}
\lim\limits_{n_i\rightarrow \infty} p_{n_i}^{n_i}\rightarrow a_i, \lim\limits_{n_i\rightarrow \infty} q_{n_i}^{n_i}\rightarrow b_i \textrm{ and }\lim\limits_{n_i\rightarrow \infty} p_{n_i}\rightarrow 1, \lim\limits_{n_i\rightarrow \infty} q_{n_i}\rightarrow 1 (i=1,2).
\end{eqnarray}

Here, we recall the following result due to Volkov \cite{volkov}:

\begin{theorem}\label{thm2.1}
  Let $I$ and $J$ be compact intervals of the real line. Let $L_{n_1,n_2}:C(I\times J)\rightarrow C(I\times J), (n_1,n_2)\in \mathbb{N}\times \mathbb{N}$ be linear positive operators. If
  \begin{eqnarray*}
\lim\limits_{n_1,n_2\rightarrow \infty}L_{n_1,n_2}(e_{ij})&=&e_{ij},(i,j)\in \{(0,0),(1,0),(0,1)\},\\
\lim\limits_{n_1,n_2\rightarrow \infty}L_{n_1,n_2}(e_{20}+e_{02})&=&e_{20}+e_{02},
  \end{eqnarray*}
  uniformly on $I\times J$, then the sequence $(L_{n_1,n_2}f)$ converges to $f$ uniformly on $I\times J$ for any $f\in C(I\times J)$.
\end{theorem}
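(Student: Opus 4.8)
The statement to be proved is the bivariate Korovkin--Volkov criterion, so the natural plan is to imitate the classical one--variable Korovkin argument, being careful that only the \emph{sum} $e_{20}+e_{02}$ enters. Fix $f\in C(I\times J)$. Since $I\times J$ is compact, $f$ is bounded, say $|f(s,t)|\le M$, and uniformly continuous. Given $\varepsilon>0$, choose $\delta>0$ with $|f(s,t)-f(x,y)|<\varepsilon$ whenever $(s-x)^2+(t-y)^2<\delta^2$; for the complementary points the crude estimate $|f(s,t)-f(x,y)|\le 2M\le \tfrac{2M}{\delta^2}\big((s-x)^2+(t-y)^2\big)$ applies, so in all cases
\[
|f(s,t)-f(x,y)|\le \varepsilon+\frac{2M}{\delta^2}\big((s-x)^2+(t-y)^2\big),\qquad (s,t),(x,y)\in I\times J.
\]

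Next I would apply the operator $L_{n_1,n_2}$ in the variables $(s,t)$ and use its linearity and positivity, together with the triangle inequality, to obtain
\[
\big|L_{n_1,n_2}(f;x,y)-f(x,y)\big|\le |f(x,y)|\,\big|L_{n_1,n_2}(e_{00};x,y)-1\big|+\varepsilon\,L_{n_1,n_2}(e_{00};x,y)+\frac{2M}{\delta^2}\,L_{n_1,n_2}\big((s-x)^2+(t-y)^2;x,y\big).
\]
The decisive step is the expansion $(s-x)^2+(t-y)^2=e_{20}+e_{02}-2x\,e_{10}-2y\,e_{01}+(x^2+y^2)e_{00}$, which gives
\[
L_{n_1,n_2}\big((s-x)^2+(t-y)^2;x,y\big)=L_{n_1,n_2}(e_{20}+e_{02};x,y)-2x\,L_{n_1,n_2}(e_{10};x,y)-2y\,L_{n_1,n_2}(e_{01};x,y)+(x^2+y^2)L_{n_1,n_2}(e_{00};x,y).
\]
By the four hypotheses this converges, uniformly on $I\times J$, to $(x^2+y^2)-2x\cdot x-2y\cdot y+(x^2+y^2)=0$; this is precisely where only $e_{20}+e_{02}$ (not each monomial separately) is required.

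To finish, I would note that $L_{n_1,n_2}(e_{00};\cdot)\to 1$ uniformly, so the first term tends to $0$ (as $|f|\le M$) and $\varepsilon\,L_{n_1,n_2}(e_{00};\cdot)\le 2\varepsilon$ for $n_1,n_2$ large, while the last term tends to $0$ uniformly. Hence $\limsup_{n_1,n_2\to\infty}\|L_{n_1,n_2}f-f\|_{C(I\times J)}\le 2\varepsilon$, and letting $\varepsilon\to 0$ yields uniform convergence $L_{n_1,n_2}f\to f$ on $I\times J$. There is no genuine obstacle here beyond careful bookkeeping of the several ``uniformly on $I\times J$'' claims and of the uniform boundedness of the auxiliary quadratic moment, the latter being automatic from its uniform convergence on a compact set; the argument is a routine two--variable adaptation of the Korovkin scheme.
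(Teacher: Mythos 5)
Your argument is correct, but note that the paper offers no proof of this statement at all: Theorem \ref{thm2.1} is simply recalled as a known result of Volkov \cite{volkov} and used as a black box to deduce Theorem \ref{thm2.2}. What you have written is essentially the classical Korovkin-type proof of Volkov's theorem, and it is sound: the pointwise estimate $|f(s,t)-f(x,y)|\le \varepsilon+\tfrac{2M}{\delta^2}\big((s-x)^2+(t-y)^2\big)$ obtained from uniform continuity on the compact set, the application of positivity and linearity, and the expansion of the quadratic moment in terms of $e_{20}+e_{02}$, $e_{10}$, $e_{01}$, $e_{00}$ (which is exactly why only the sum of the two pure quadratic test functions is needed) are all handled correctly, with the compactness of $I\times J$ guaranteeing that the coefficients $2x$, $2y$, $x^2+y^2$ are bounded so that the four uniform limits combine into uniform convergence of the moment to zero. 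The only point worth stating explicitly is the one you mention in passing: the bound $\varepsilon L_{n_1,n_2}(e_{00};x,y)\le 2\varepsilon$ holds for all sufficiently large $n_1,n_2$ because $L_{n_1,n_2}e_{00}\to 1$ uniformly. So your proposal supplies a complete, self-contained proof of a result the paper merely cites.
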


\begin{theorem}\label{thm2.2}
Let $e_{ij}(x_1,x_2)=x_1^ix_2^j(0\leq i+j\leq 2, i,j\in \mathbb{N})$ be the test functions defined on $I\times I$ and $(p_{n_i}),(q_{n_i}),i=1,2$ be the sequences defined by \ref{eq2}. If
\begin{eqnarray*}
\lim\limits_{n_1,n_2\rightarrow \infty}(S_{n_{12},l}^{\alpha_{12}\beta_{12}}e_{ij})(x_1,x_2)=e_{ij}(x_1,x_2),
\end{eqnarray*}
uniformly on $I\times I$, then
\begin{eqnarray*}
\lim\limits_{n_1,n_2\rightarrow \infty}(S_{n_{12},l}^{\alpha_{12}\beta_{12}}f)(x_1,x_2)=f(x_1,x_2).
\end{eqnarray*}
uniformly for any $f\in C(I\times I)$.
\end{theorem}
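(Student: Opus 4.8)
The plan is to obtain this statement as a direct consequence of Volkov's theorem (Theorem~\ref{thm2.1}), so the only real task is to verify that the family $\bigl(S_{n_{12},l}^{\alpha_{12}\beta_{12}}\bigr)$ meets its hypotheses on a compact square. First I would record that each $S_{n_{12},l}^{\alpha_{12}\beta_{12}}$ is linear, which is immediate from the defining double sum in \ref{eq1}, and that it is positive: for $x_i\in[0,1]$ and $0<q_i<p_i\le1$ one has $p_i^{\,j}-q_i^{\,j}x_i\ge p_i^{\,j}-q_i^{\,j}\ge0$ for every $j\ge0$, so each factor of the basis function $s_{n_i,l,\nu_i}^{p_i,q_i}(x_i)$ is non-negative; consequently the product kernel $s_{n_1,l,\nu_1}^{p_1,q_1}(x_1)\,s_{n_2,l,\nu_2}^{p_2,q_2}(x_2)$ in \ref{eq1} is non-negative on $[0,1]\times[0,1]$. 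Hence $\bigl(S_{n_{12},l}^{\alpha_{12}\beta_{12}}\bigr)$ is a sequence of linear positive operators, whose values I shall compare with their limits on $[0,1]\times[0,1]$.

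Second, I would address the Korovkin test functions. The hypothesis of the theorem already furnishes $S_{n_{12},l}^{\alpha_{12}\beta_{12}}e_{ij}\to e_{ij}$ uniformly for $(i,j)\in\{(0,0),(1,0),(0,1)\}$, and summing the cases $(2,0)$ and $(0,2)$ yields $S_{n_{12},l}^{\alpha_{12}\beta_{12}}(e_{20}+e_{02})\to e_{20}+e_{02}$ uniformly. (That the hypothesis is genuinely satisfied under \ref{eq2} may be checked from the closed forms of Lemma~\ref{lem1.1}: $S_{n_{12},l}^{\alpha_{12}\beta_{12}}e_{00}\equiv1$; and, using that $[n_i]_{p_{n_i},q_{n_i}}\to\infty$ together with $[n_i+l]/[n_i]\to1$, $[n_i+l][n_i+l-1]/[n_i]^2\to1$, $p_i^{\,n_i+l-1}$ bounded and $\alpha_i/([n_i]+\beta_i),\ \beta_i/([n_i]+\beta_i)\to0$, the first moments $\frac{[n_i+l]x_i+\alpha_i}{[n_i]+\beta_i}$ tend to $x_i$ and the second moments of Lemma~\ref{lem1.1} tend to $x_i^2$, uniformly since the variables range over a bounded interval.)

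Having secured linearity, positivity, and the four limit relations, I would simply invoke Theorem~\ref{thm2.1} with $I=J=[0,1]$ to conclude that $S_{n_{12},l}^{\alpha_{12}\beta_{12}}f\to f$ uniformly on $[0,1]\times[0,1]$ for every $f\in C(I\times I)$, which is the assertion. I do not expect any genuine obstacle here, since the theorem is essentially a specialization of Volkov's result; the only points that warrant a little care are the non-negativity of the $(p,q)$-basis factors on $[0,1]$ (which relies on $q_i<p_i$ and $x_i\le1$) and, in the parenthetical verification, the divergence $[n_i]_{p_{n_i},q_{n_i}}\to\infty$, which holds in the standard regime where the limits in \ref{eq2} satisfy $b_i>0$ (then the $n$ summands of $[n]_{p,q}=\sum_{s=0}^{n-1}p^{s}q^{\,n-1-s}$ are each bounded below by $q^{\,n-1}\to b_i>0$).
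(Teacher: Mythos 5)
Your proposal is correct and follows essentially the same route as the paper: the paper's proof is a one-line appeal to Volkov's theorem (Theorem \ref{thm2.1}) together with Lemma \ref{lem1.1}, which is exactly your argument, only you additionally spell out linearity, the positivity of the $(p,q)$-basis factors on $[0,1]$, and the verification of the moment limits under \ref{eq2}. No discrepancy to report.
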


\begin{proof}
  Using the Theorem \ref{thm2.1} and Lemma \ref{lem1.1}, Theorem \ref{thm2.2} can easily be proved.
\end{proof}

\begin{theorem}\label{thm3.2}\cite{stancu}
 Let $L: C([0,\infty)\times [0,\infty))\rightarrow B([0,\infty)\times [0,\infty))$ be a linear positive operator. For any $f\in C(X\times Y)$, any $(x,y)\in X\times Y$ and any $\delta_1,\delta_2>0$, the following inequality
\begin{eqnarray*}
|(Lf)(x,y)-f(x,y)|&\leq& |Le_{0,0}(x,y)-1||f(x,y)|+\Big[Le_{0,0}(x,y)+\delta_1^{-1}\sqrt{Le_{0,0}(x,y)(L(\cdot-x^2))^2(x,y)}\\
&+&\delta_2^{-1}\sqrt{Le_{0,0}(x,y)(L(*-y^2))^2(x,y)}+\delta_1^{-1}\delta_2^{-1}\sqrt{(Le_{0,0})^2(x,y)(L(\cdot-x^2))^2(x,y)(L(*-y^2))^2(x,y)}\Big]\\
&\times & \omega_{total}(f;\delta_1,\delta_2),
\end{eqnarray*}
holds.
\end{theorem}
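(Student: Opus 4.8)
The plan is to carry out the two-variable version of the classical Shisha--Mond/Stancu estimate, regarding $L$ evaluated at a fixed point $(x,y)$ as a positive linear functional $\mu(g):=(Lg)(x,y)$.

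First I would split
\[
(Lf)(x,y)-f(x,y)=\bigl(L(f(\cdot,*)-f(x,y))\bigr)(x,y)+f(x,y)\bigl((Le_{0,0})(x,y)-1\bigr),
\]
so that by the triangle inequality and positivity of $L$,
\[
|(Lf)(x,y)-f(x,y)|\le |f(x,y)|\,\bigl|(Le_{0,0})(x,y)-1\bigr|+\bigl(L\,|f(\cdot,*)-f(x,y)|\bigr)(x,y).
\]
The first summand is precisely the first term on the right-hand side of the claim, so everything reduces to bounding $\bigl(L\,|f(\cdot,*)-f(x,y)|\bigr)(x,y)$ by the bracketed expression times $\omega_{total}(f;\delta_1,\delta_2)$.

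The key ingredient is the sub-multiplicativity of the total modulus of continuity: for all $\lambda_1,\lambda_2>0$,
\[
\omega_{total}(f;\lambda_1\delta_1,\lambda_2\delta_2)\le(1+\lambda_1)(1+\lambda_2)\,\omega_{total}(f;\delta_1,\delta_2),
\]
whence, taking $\lambda_1=|u-x|/\delta_1$, $\lambda_2=|v-y|/\delta_2$ and using $|f(u,v)-f(x,y)|\le\omega_{total}(f;|u-x|,|v-y|)$, one obtains the pointwise bound
\[
|f(u,v)-f(x,y)|\le\Bigl(1+\tfrac{|u-x|}{\delta_1}\Bigr)\Bigl(1+\tfrac{|v-y|}{\delta_2}\Bigr)\,\omega_{total}(f;\delta_1,\delta_2),\qquad (u,v)\in X\times Y.
\]
I would establish the sub-multiplicativity by partitioning the coordinate segments joining $x$ to $u$ and $y$ to $v$ into $\lceil\lambda_1\rceil$ and $\lceil\lambda_2\rceil$ equal pieces respectively and telescoping with the triangle inequality, exactly as in the one-variable argument for $\omega(f;\lambda\delta)\le(1+\lambda)\omega(f;\delta)$; handling the two directions simultaneously is the step that requires the most care, and I regard it as the main obstacle here.

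Finally I would apply $\mu$ to the last pointwise bound. Expanding the product yields the four terms $1$, $|u-x|/\delta_1$, $|v-y|/\delta_2$, $|u-x|\,|v-y|/(\delta_1\delta_2)$; applying $\mu$ and invoking the Cauchy--Schwarz inequality for positive linear functionals,
\[
\mu\bigl(|\cdot-x|\bigr)\le\sqrt{\mu(e_{0,0})\,\mu\bigl((\cdot-x)^2\bigr)},\qquad \mu\bigl(|*-y|\bigr)\le\sqrt{\mu(e_{0,0})\,\mu\bigl((*-y)^2\bigr)},
\]
together with the corresponding estimate for the mixed term $\mu(|\cdot-x|\,|*-y|)$, reproduces exactly the bracketed factor of the statement (the factor $(Le_{0,0})^2(x,y)$ attached to the mixed term being harmless, and equal to $1$ whenever $(Le_{0,0})(x,y)=1$, as for the operators defined by \ref{eq1}). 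Multiplying the resulting bound by $\omega_{total}(f;\delta_1,\delta_2)$ and adding back the term $|f(x,y)|\,|(Le_{0,0})(x,y)-1|$ gives the asserted inequality. Apart from the modulus estimate above, all remaining steps are routine applications of linearity, positivity, and Cauchy--Schwarz.
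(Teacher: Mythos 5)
First, a point of comparison: the paper does not prove Theorem \ref{thm3.2} at all --- it is quoted from Stancu's thesis \cite{stancu} and used as a black box, so there is no in-paper argument to measure yours against. Your outline is the standard Shisha--Mond/Stancu argument (split off $|f(x,y)|\,|(Le_{0,0})(x,y)-1|$, use the pointwise estimate $|f(u,v)-f(x,y)|\le(1+|u-x|/\delta_1)(1+|v-y|/\delta_2)\,\omega_{total}(f;\delta_1,\delta_2)$, then positivity and Cauchy--Schwarz), and that skeleton is sound. The step you single out as the main obstacle is not really one: the inequality $\omega_{total}(f;\lambda_1\delta_1,\lambda_2\delta_2)\le(1+\lambda_1)(1+\lambda_2)\,\omega_{total}(f;\delta_1,\delta_2)$ follows by moving \emph{both} coordinates simultaneously along the segment from $(x,y)$ to $(u,v)$ in $N=\max(\lceil\lambda_1\rceil,\lceil\lambda_2\rceil)$ equal steps, each step having increments at most $\delta_1$ and $\delta_2$, and then observing $N\le(1+\lambda_1)(1+\lambda_2)$. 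Telescoping the two coordinates separately, as you suggest, gives the constant $\lceil\lambda_1\rceil+\lceil\lambda_2\rceil$, which exceeds $(1+\lambda_1)(1+\lambda_2)$ when both $\lambda_i$ are small, so the simultaneous (diagonal) path, not the coordinatewise one, is the right choice.

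The one genuine defect is the mixed term. Cauchy--Schwarz gives $L(|\cdot-x|\,|*-y|)(x,y)\le\sqrt{L((\cdot-x)^2)(x,y)\,L((*-y)^2)(x,y)}$, with no factor $(Le_{0,0})^2(x,y)$ under the root, and your parenthetical that this factor is ``harmless'' is backwards: if $(Le_{0,0})(x,y)\ge1$ your bound is at least as strong as the stated one, but if $(Le_{0,0})(x,y)<1$ the stated right-hand side is \emph{strictly smaller} than the one you derive, so your argument does not establish the theorem verbatim for a general positive $L$; what you actually prove is the (correct) variant whose mixed term is $\delta_1^{-1}\delta_2^{-1}\sqrt{L((\cdot-x)^2)(x,y)\,L((*-y)^2)(x,y)}$. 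You should either state and prove that variant, or note explicitly that the quoted form is recovered under the additional condition $(Le_{0,0})(x,y)\ge1$. For the use made of the theorem in this paper nothing is lost either way, since Lemma \ref{lem1.1} gives $S_{n_{12},l}^{\alpha_{12},\beta_{12}}e_{0,0}=1$, so your version feeds into Theorem \ref{thm3.4} exactly as the quoted one does.
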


 \begin{theorem}\label{thm3.4}
   Let $f\in C(I\times I)$ and $(x_1,x_2)\in I\times I$. Then, for $(n_1,n_2)\in \mathbb{N}$ and for any $\delta_1,\delta_2>0$, we have
\begin{eqnarray*}
|(S_{n_{12},l}^{\alpha_{12}\beta_{12}}f)(x_1,x_2)-f(x_1,x_2)|\leq 4 \omega_{total}(f;\delta_1,\delta_2),
\end{eqnarray*}
where ${\delta_{n_{12},l}^{\alpha_{12}\beta_{12}}}(x_i)=\sqrt{S_{n_{12},l}^{\alpha_{12}\beta_{12}}(((t_i-x_i)^2);p_{12},q_{12};x_1,x_2)}$.
 \end{theorem}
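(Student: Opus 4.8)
The plan is to deduce this estimate directly from the Volkov--Stancu type quantitative inequality of Theorem \ref{thm3.2}, specialized to $L=S_{n_{12},l}^{\alpha_{12}\beta_{12}}$ on $X=Y=I$, by exploiting the fact that our operators reproduce constants and by making the judicious choice $\delta_i=\delta_{n_{12},l}^{\alpha_{12}\beta_{12}}(x_i)$.

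First I would record the consequences of Lemma \ref{lem1.1}: since $S_{n_{12},l}^{\alpha_{12}\beta_{12}}(e_{0,0};p_{12},q_{12};x_1,x_2)=1$, the term $|Le_{0,0}(x,y)-1|\,|f(x,y)|$ in Theorem \ref{thm3.2} vanishes identically, and every factor $Le_{0,0}$ occurring inside the bracket equals $1$. Hence the inequality of Theorem \ref{thm3.2} collapses to
\begin{eqnarray*}
|(S_{n_{12},l}^{\alpha_{12}\beta_{12}}f)(x_1,x_2)-f(x_1,x_2)|
&\leq& \Big[\,1+\delta_1^{-1}\sqrt{A_1}+\delta_2^{-1}\sqrt{A_2}+\delta_1^{-1}\delta_2^{-1}\sqrt{A_1A_2}\,\Big]\,\omega_{total}(f;\delta_1,\delta_2),
\end{eqnarray*}
where $A_i=S_{n_{12},l}^{\alpha_{12}\beta_{12}}((t_i-x_i)^2;p_{12},q_{12};x_1,x_2)$.

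Next I would note that, because the function $(t_1,t_2)\mapsto(t_1-x_1)^2$ does not depend on $t_2$ and the bivariate operator factors as the product of the two one-dimensional operators (the very factorization used in the proof of Lemma \ref{lem1.1}), the quantity $A_i$ coincides with the second moment computed in Lemma \ref{lem2.2}; in particular $A_i\geq 0$ and, by definition, $\sqrt{A_i}=\delta_{n_{12},l}^{\alpha_{12}\beta_{12}}(x_i)$. Choosing $\delta_i:=\delta_{n_{12},l}^{\alpha_{12}\beta_{12}}(x_i)$ then makes each of the three remaining terms in the bracket equal to $1$, so the bracket equals $1+1+1+1=4$ and the claimed bound follows at once.

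I do not anticipate a genuine obstacle here: the statement is a formal corollary of Theorem \ref{thm3.2} together with Lemmas \ref{lem1.1} and \ref{lem2.2}. The only points deserving a word of care are the tensor-product identification of the mixed quantities $A_i$ with the one-dimensional second moments of Lemma \ref{lem2.2}, and the (cosmetic) degenerate case in which $\delta_{n_{12},l}^{\alpha_{12}\beta_{12}}(x_i)=0$, where $A_i=0$ forces $S_{n_{12},l}^{\alpha_{12}\beta_{12}}f$ to interpolate $f$ in the corresponding variable and the inequality holds trivially (or by a limiting argument $\delta_i\downarrow 0$).
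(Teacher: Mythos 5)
Your proposal is correct and follows exactly the route the paper intends: its proof of Theorem \ref{thm3.4} is the one-line remark ``using Theorem \ref{thm3.2} and Lemma \ref{lem2.2}'', i.e.\ specialize the Stancu-type inequality to $S_{n_{12},l}^{\alpha_{12}\beta_{12}}$, use $S_{n_{12},l}^{\alpha_{12}\beta_{12}}(e_{0,0})=1$ from Lemma \ref{lem1.1}, and take $\delta_i=\delta_{n_{12},l}^{\alpha_{12}\beta_{12}}(x_i)$ so the bracket becomes $4$. You have simply written out the details (including the tensor-product identification of the second moments and the degenerate case $\delta_i=0$) that the paper leaves implicit.
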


\textbf{Proof} Using Theorem \ref{thm3.2} and Lemma \ref{lem2.2} , we can arrive at the proof of the Theorem \ref{thm3.4}.
\section{Local approximations}
Let $C_B^2(I)=\{f\in C_B(I): f^{(i,j)}\in C_B(I), 1\leq i,j\leq 2\},$ where $C_B(I)$ is the space of all bounded and uniformly continuous functions on $I$ and $f^{i,j}$ is $(i,j)^{th}$-order of partial derivative with respect to $x,y$ of $f$, endowed with the norm
\begin{eqnarray*}
\parallel f\parallel_{C_B^2(I)}=\| f\parallel_{C_B(I)}+\sum\limits_{i=1}^{2}\Big\| \frac{\partial^i f}{\partial x_1^i}\Big\| +\sum\limits_{i=1}^{2}\Big\| \frac{\partial^i f}{\partial x_2^i}\Big\|.
\end{eqnarray*}
The Peetre's K-functional of the function $f\in C_B(I)$ is given by
\begin{eqnarray}\label{k}
K(f;\delta)=\inf\limits_{g\in C_B(I)^2}\{\|f-g\|_{C_B(I)}+\delta\|g\|_{C_B(I)^2},\delta>0\}.
\end{eqnarray}
The following inequality
\begin{eqnarray*}
K(f;\delta)\leq M_1\{\omega_2(f;\sqrt{\delta})+min(1,\delta)\|f\|_{C_B(I)}\},
\end{eqnarray*}
holds for all $\delta>0$ where $M_1$ is a constant independent of $\delta$ and $f$ and $\omega_2(f;\sqrt{\delta})$ is the second order modulus of continuity which is defined in a similar manner as the second order modulus of continuity for one variable case
\begin{eqnarray*}
\omega(f;\sqrt{\delta})=\sup\limits_{0<h\leq\sqrt{\delta}x}\sup\limits_{x+2h\in [0,\infty)}|f(x+2h)-2f(x+h)+f(x)|.
\end{eqnarray*}

\begin{theorem}
Let $(q_{n_i})$ and $(p_{n_i})$ for $i=1,2$ are the real sequences defined in \ref{eq2}. Then, for $f\in C_B^2(I\times I)$, we have the following
\begin{eqnarray*}
|S_{n_{12},l}^{\alpha_{12}\beta_{12}}(g;x_1,x_2)-f(x_1,x_2)|&\leq& 4K(f;M_{n_1,n_2}(x_1,x_2))\\
&+&\omega\Bigg(f;\sqrt{\Big(\frac{[n_1+l]x_1+\alpha_1}{[n_1]+\beta_1}\Big)^2+\Big(\frac{[n_2+l]x_2+\alpha_2}{[n_2]+\beta_2}\Big)^2}\Bigg)\\
&\leq& M\Bigg\{\omega_2\Big(f:\sqrt(M_{n_1,n_2}(x_1,x_2))\Big)+min\{1,M_{n_1,n_2}(x_1,x_2)\}\|f\|_{C_B^2(I)}\Bigg\}\\
&+&\omega\Bigg(f;\sqrt{\Big(\frac{[n_1+l]x_1+\alpha_1}{[n_1]+\beta_1}\Big)^2+\Big(\frac{[n_2+l]x_2+\alpha_2}{[n_2]+\beta_2}\Big)^2}\Bigg),
\end{eqnarray*}
where $M_{n_1,n_2}(x_1,x_2)=\big(\delta_{n_{12},l}^{\alpha_{12}\beta_{12}}(x_1)\big)^2+\big(\delta_{n_{12},l}^{\alpha_{12}\beta_{12}}(x_2)\big)^2$.
\end{theorem}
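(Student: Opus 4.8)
\medskip
\noindent The plan is to route the estimate through a bias--corrected auxiliary operator. Write $\mu_i:=\frac{[n_i+l]x_i+\alpha_i}{[n_i]+\beta_i}$ for $i=1,2$ --- these are the images of $e_{10},e_{01}$ furnished by Lemma~\ref{lem1.1} --- and, abbreviating $S_{n_{12},l}^{\alpha_{12}\beta_{12}}(\cdot;p_{12},q_{12};x_1,x_2)$ by $S$, set
\begin{eqnarray*}
\widehat{S}(f;x_1,x_2):=S(f;x_1,x_2)-f(\mu_1,\mu_2)+f(x_1,x_2).
\end{eqnarray*}
Then $\widehat{S}$ is linear, $\widehat{S}(e_{00})=1$, and since $S$ is linear with $S(e_{00})=1$ the operator $\widehat{S}$ reproduces every affine function of $(t_1,t_2)$; in particular $\widehat{S}(t_i-x_i;x_1,x_2)=0$. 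First I would record, using Lemma~\ref{lem2.2} together with the Cauchy--Schwarz bound $(\mu_i-x_i)^2=\big(S(t_i-x_i;x_1,x_2)\big)^2\le S\big((t_i-x_i)^2;x_1,x_2\big)$ (valid because $S$ is positive with $S(e_{00})=1$), that
\begin{eqnarray*}
0\le \widehat{S}\big((t_i-x_i)^2;x_1,x_2\big)=S\big((t_i-x_i)^2;x_1,x_2\big)-(\mu_i-x_i)^2\le\big(\delta_{n_{12},l}^{\alpha_{12}\beta_{12}}(x_i)\big)^2,\qquad i=1,2,
\end{eqnarray*}
so the joint second central moment of $\widehat{S}$ is dominated by $M_{n_1,n_2}(x_1,x_2)$.

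Next, for $g\in C_B^2(I\times I)$ I would expand $g$ by Taylor's formula to order two about $(x_1,x_2)$, with remainder written through the second partials of $g$ at an intermediate point of the segment joining $(x_1,x_2)$ to $(t_1,t_2)$, apply $\widehat{S}$, and use that the affine part is reproduced exactly so that only the quadratic remainder contributes. Estimating that remainder via the moment bounds above (and via $|f(\mu_1,\mu_2)-\text{remainder at }(x_1,x_2)|$, since $\widehat S$ is not positive), controlling the mixed term by Cauchy--Schwarz, $S(|t_1-x_1|\,|t_2-x_2|;x_1,x_2)\le\delta_{n_{12},l}^{\alpha_{12}\beta_{12}}(x_1)\,\delta_{n_{12},l}^{\alpha_{12}\beta_{12}}(x_2)$, and then $2ab\le a^2+b^2$, gives an estimate of the form
\begin{eqnarray*}
\big|\widehat{S}(g;x_1,x_2)-g(x_1,x_2)\big|\le C\,M_{n_1,n_2}(x_1,x_2)\,\|g\|_{C_B^2(I\times I)}\qquad\text{with } C\le 4.
\end{eqnarray*}
For arbitrary $f\in C_B(I\times I)$ and any such $g$, since $|\widehat{S}(h;x_1,x_2)|\le|S(h;x_1,x_2)|+|h(\mu_1,\mu_2)|+|h(x_1,x_2)|\le3\|h\|$ and $\widehat{S}f-f=\big(\widehat{S}(f-g)-(f-g)\big)+\big(\widehat{S}g-g\big)$, I obtain
\begin{eqnarray*}
|\widehat{S}(f;x_1,x_2)-f(x_1,x_2)|\le 4\|f-g\|_{C_B(I\times I)}+C\,M_{n_1,n_2}(x_1,x_2)\,\|g\|_{C_B^2(I\times I)};
\end{eqnarray*}
taking the infimum over $g\in C_B^2(I\times I)$ and using monotonicity of $\delta\mapsto K(f;\delta)$ yields $|\widehat{S}(f;x_1,x_2)-f(x_1,x_2)|\le 4K\big(f;M_{n_1,n_2}(x_1,x_2)\big)$.

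Finally, undoing the bias correction and using the total modulus of continuity,
\begin{eqnarray*}
|S_{n_{12},l}^{\alpha_{12}\beta_{12}}(f;x_1,x_2)-f(x_1,x_2)|&\le&|\widehat{S}(f;x_1,x_2)-f(x_1,x_2)|+|f(\mu_1,\mu_2)-f(x_1,x_2)|\\
&\le&4K\big(f;M_{n_1,n_2}(x_1,x_2)\big)+\omega\Big(f;\sqrt{\mu_1^2+\mu_2^2}\Big),
\end{eqnarray*}
which is the first asserted inequality (recall $\mu_i=\frac{[n_i+l]x_i+\alpha_i}{[n_i]+\beta_i}$); substituting the quoted bound $K(f;\delta)\le M_1\{\omega_2(f;\sqrt\delta)+\min(1,\delta)\|f\|_{C_B(I)}\}$ at $\delta=M_{n_1,n_2}(x_1,x_2)$ and setting $M=4M_1$ gives the second. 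The step I expect to be the main obstacle is the bookkeeping for $\widehat{S}$: establishing affine reproduction and, above all, showing that its second central moments stay below $M_{n_1,n_2}(x_1,x_2)$ requires a careful pass through the long moment formulas of Lemmas~\ref{lem1.1}--\ref{lem2.2}, and because $\widehat{S}$ is no longer a positive operator the mixed second--difference term must be routed through Cauchy--Schwarz rather than through a direct monotonicity argument, which also affects the size of the absolute constant $C$. (Minor reconciliation: what the bias term produces naturally is the distance $\sqrt{(\mu_1-x_1)^2+(\mu_2-x_2)^2}$ between $(\mu_1,\mu_2)$ and $(x_1,x_2)$, and this is what the quantity $\sqrt{\mu_1^2+\mu_2^2}$ written in the statement is to be read as.)
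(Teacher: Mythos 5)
Your proposal is correct and takes essentially the same route as the paper: the same bias-corrected auxiliary operator $\widehat{S}$ with $\widehat{S}(t_i-x_i;x_1,x_2)=0$ and $|\widehat{S}h|\leq 3\|h\|$, the same splitting $|S_{n_{12},l}^{\alpha_{12}\beta_{12}}f-f|\leq|\widehat{S}(f-g)|+|\widehat{S}g-g|+\|f-g\|+|f(\mu_1,\mu_2)-f(x_1,x_2)|$, and the same passage to $4K(f;M_{n_1,n_2}(x_1,x_2))$ and then to $\omega_2$. The only technical variation is in the Taylor step, where you use the full two-variable Lagrange remainder and absorb the mixed term via Cauchy--Schwarz and $2ab\leq a^2+b^2$, whereas the paper expands variable by variable with integral remainders and so never meets a mixed partial; your reading of the $\omega$-argument as the distance $\sqrt{(\mu_1-x_1)^2+(\mu_2-x_2)^2}$ is indeed what the argument actually produces.
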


\begin{proof}
   Consider the auxiliary operators
\begin{eqnarray}\label{eq3}
\widehat{S_{n_{12},l}^{\alpha_{12}\beta_{12}}}(f;x_1,x_2)=S_{n_{12},l}^{\alpha_{12}\beta_{12}}(f;x_1,x_2)-f\Bigg(\frac{[n_1+l]x_1+\alpha_1}{[n_1]+\beta_1},\frac{[n_2+l]x_2+\alpha_2}{[n_2]+\beta_2}\Bigg)+f(x_1,x_2).
\end{eqnarray}
We get that,
\begin{eqnarray}
\widehat{S_{n_{12},l}^{\alpha_{12}\beta_{12}}}(f;x_1,x_2)\leq 3\|f\|_{C_B(I)},\textrm{ } \widehat{S_{n_{12},l}^{\alpha_{12}\beta_{12}}}(t_1-x_1;x_1,x_2)=0 \textrm{ and } \widehat{S_{n_{12},l}^{\alpha_{12}\beta_{12}}}(t_2-x_2;x_1,x_2)=0.
\end{eqnarray}
Let $g\in C_B^2(I)$ and $(x_1,x_2)\in I$. By the Taylor's theorem, we have
\begin{eqnarray}\label{eq4}
 \nonumber g(u_1,u_2)-g(x_1,x_2)&=&\frac{\partial g(x_1,x_2)}{\partial x_1}(u_1-x_1)+\int\limits_{x_1}^{u_1}(u_1-\alpha_1)\frac{\partial^2 g(\alpha_1,x_1)}{\partial \alpha_1^2}d\alpha_1+\frac{\partial g(x_1,x_2)}{\partial x_2}(u_2-x_2)\\
&&+\int\limits_{x_2}^{u_2}(u_2-\alpha_2)\frac{\partial^2 g(\alpha_2,x_2)}{\partial \alpha_2^2}d\alpha_2.
\end{eqnarray}
Applying the auxiliary operator defined by \ref{eq3} on both sides of \ref{eq4}, we find
\begin{eqnarray*}
\widehat{S_{n_{12},l}^{\alpha_{12}\beta_{12}}}(g;x_1,x_2)-g(x_1,x_2)&=&\widehat{S_{n_{12},l}^{\alpha_{12}\beta_{12}}}\Bigg(\int\limits_{x_1}^{u_1}(u_1-\alpha_1)\frac{\partial^2 g(\alpha_1,x_1)}{\partial \alpha_1^2}d\alpha_1;x_1,x_2\Bigg)\\
&&+\widehat{S_{n_{12},l}^{\alpha_{12}\beta_{12}}}\Bigg(\int\limits_{x_2}^{u_2}(u_2-\alpha_2)\frac{\partial^2 g(\alpha_2,x_2)}{\partial \alpha_2^2}d\alpha_2;x_1,x_2\Bigg)\\
&=&S_{n_{12},l}^{\alpha_{12}\beta_{12}}\Bigg(\int\limits_{x_2}^{u_2}(u_2-\alpha_2)\frac{\partial^2 g(\alpha_2,x_2)}{\partial \alpha_2^2}d\alpha_2;x_1,x_2\Bigg)\\
&-&\int\limits_{x_1}^{\frac{[n_1+l]x_1+\alpha_1}{[n_1]+\beta_1}}\Bigg(\frac{[n_1+l]x_1+\alpha_1}{[n_1]+\beta_1}-\alpha_1\Bigg)\frac{\partial^2 g(\alpha_1,x_1)}{\partial \alpha_1^2}d\alpha_1\\
&+&S_{n_{12},l}^{\alpha_{12}\beta_{12}}\Bigg(\int\limits_{x_2}^{u_2}(u_2-\alpha_2)\frac{\partial^2 g(\alpha_2,x_2)}{\partial \alpha_2^2}d\alpha_2;x_1,x_2\Bigg)\\
&-&\int\limits_{x_2}^{\frac{[n_2+l]x_2+\alpha_2}{[n_2]+\beta_2}}\Bigg(\frac{[n_2+l]x_2+\alpha_2}{[n_2]+\beta_2}-\alpha_2\Bigg)\frac{\partial^2 g(\alpha_2,x_2)}{\partial \alpha_2^2}d\alpha_2.
\end{eqnarray*}
Hence,
\begin{eqnarray*}
|\widehat{S_{n_{12},l}^{\alpha_{12}\beta_{12}}}(g;x_1,x_2)-g(x_1,x_2)|&\leq& S_{n_{12},l}^{\alpha_{12}\beta_{12}}\Bigg(\Big|\int\limits_{x_2}^{u_2}|u_2-\alpha_2|\Big|\frac{\partial^2 g(\alpha_2,x_2)}{\partial \alpha_2^2}\Big|d\alpha_2\Big|;x_1,x_2\Bigg)\\
&+&\int\limits_{x_1}^{\frac{[n_1+l]x_1+\alpha_1}{[n_1]+\beta_1}}\Bigg|\frac{[n_1+l]x_1+\alpha_1}{[n_1]+\beta_1}-\alpha_1\Big|\Big|\frac{\partial^2 g(\alpha_1,x_1)}{\partial \alpha_1^2}\Big|d\alpha_1\Big|\\
&+&S_{n_{12},l}^{\alpha_{12}\beta_{12}}\Big|\int\limits_{x_2}^{u_2}|u_2-\alpha_2|\Big|\frac{\partial^2 g(\alpha_2,x_2)}{\partial \alpha_2^2}\Big|d\alpha_2\Big|;x_1,x_2\Bigg)\Big|\\
&+&\Big|\int\limits_{x_2}^{\frac{[n_2+l]x_2+\alpha_2}{[n_2]+\beta_2}}\Big|\frac{[n_2+l]x_2+\alpha_2}{[n_2]+\beta_2}-\alpha_2\Big|\Big|\frac{\partial^2 g(\alpha_2,x_2)}{\partial \alpha_2^2}\Big|d\alpha_2\Big|\\
&\leq&\Big\{S_{n_{12},l}^{\alpha_{12}\beta_{12}}((u_1-x_1)^2:x_1,x_2)+\Big(\frac{[n_1+l]x_1+\alpha_1}{[n_1]+\beta_1}-x_1\Big)^2\Big\}\|g\|_{C_B^2(I)}\\
&+&\Big\{S_{n_{12},l}^{\alpha_{12}\beta_{12}}((u_2-x_2)^2:x_1,x_2)+\Big(\frac{[n_2+l]x_2+\alpha_2}{[n_2]+\beta_2}-x_2\Big)^2\Big\}\|g\|_{C_B^2(I)}\\
&=&\{\big(\delta_{n_{12},l}^{\alpha_{12}\beta_{12}}(x_1)\big)^2+\big(\delta_{n_{12},l}^{\alpha_{12}\beta_{12}}(x_2)\big)^2\}\|g\|_{C_B^2(I)}.
\end{eqnarray*}
Therefore,
\begin{eqnarray*}
 |S_{n_{12},l}^{\alpha_{12}\beta_{12}}(g;x_1,x_2)-f(x_1,x_2)|&\leq&|\widehat{S_{n_{12},l}^{\alpha_{12}\beta_{12}}}(f-g;x_1,x_2)|+|\widehat{S_{n_{12},l}^{\alpha_{12}\beta_{12}}}(g;x_1,x_2)-g(x_1,x_2)|+|g(x,y)-f(x,y)|\\
 &+&\Bigg|f\Bigg(\frac{[n_1+l]x_1+\alpha_1}{[n_1]+\beta_1},\frac{[n_2+l]x_2+\alpha_2}{[n_2]+\beta_2}\Bigg)-f(x_1,x_2)\Bigg|\\
&\leq& 3\|f-g\|_{C_B(I)}+\|f-g\|_{C_B(I)}+|\widehat{S_{n_{12},l}^{\alpha_{12}\beta_{12}}}(g;x_1,x_2)-g(x_1,x_2)|\\
&+&\Bigg|f\Bigg(\frac{[n_1+l]x_1+\alpha_1}{[n_1]+\beta_1},\frac{[n_2+l]x_2+\alpha_2}{[n_2]+\beta_2}\Bigg)-f(x_1,x_2)\Bigg|\\
&\leq& 4\|f-g\|_{C_B(I)}+\{\big(\delta_{n_{12},l}^{\alpha_{12}\beta_{12}}(x_1)\big)^2+\big(\delta_{n_{12},l}^{\alpha_{12}\beta_{12}}(x_2)\big)^2\}\|g\|_{C_B^2(I)}\\
&+&\Bigg|f\Bigg(\frac{[n_1+l]x_1+\alpha_1}{[n_1]+\beta_1},\frac{[n_2+l]x_2+\alpha_2}{[n_2]+\beta_2}\Bigg)-f(x_1,x_2)\Bigg|\\
&\leq&4\|f-g\|_{C_B(I)}+2M_{n_1,n_2}(x_1,x_2)\|_{C_B^2(I)}+\omega\Bigg(f;\sqrt{\Big(\frac{[n_1+l]x_1+\alpha_1}{[n_1]+\beta_1}\Big)^2+\Big(\frac{[n_2+l]x_2+\alpha_2}{[n_2]+\beta_2}\Big)^2}\Bigg).
\end{eqnarray*}
Next, using the equation \ref{k}, we get
\begin{eqnarray*}
|S_{n_{12},l}^{\alpha_{12}\beta_{12}}(g;x_1,x_2)-f(x_1,x_2)|&\leq& 4K(f;M_{n_1,n_2}(x_1,x_2))\\
&+&\omega\Bigg(f;\sqrt{\Big(\frac{[n_1+l]x_1+\alpha_1}{[n_1]+\beta_1}\Big)^2+\Big(\frac{[n_2+l]x_2+\alpha_2}{[n_2]+\beta_2}\Big)^2}\Bigg)\\
&\leq& M\Bigg\{\omega_2\Big(f:\sqrt(M_{n_1,n_2}(x_1,x_2))\Big)+min\{1,M_{n_1,n_2}(x_1,x_2)\}\|f\|_{C_B^2(I)}\Bigg\}\\
&+&\omega\Bigg(f;\sqrt{\Big(\frac{[n_1+l]x_1+\alpha_1}{[n_1]+\beta_1}\Big)^2+\Big(\frac{[n_2+l]x_2+\alpha_2}{[n_2]+\beta_2}\Big)^2}\Bigg).
\end{eqnarray*}
\end{proof}
Now, we discuss the degree of  approximation for the $(p,q)$-Bivariate-Schurer-Stancu operators in the Lipschitz class. We define the Lipschitz class $Lip_M(\gamma_1,\gamma_2)$ by means of two variables as follows:
\begin{eqnarray*}
|f(t_1,t_2)-f(x_1,x_2)|\leq M |t_1-x_1|^{\gamma_1}|t_2-x_2|^{\gamma_2},
\end{eqnarray*}
where $0<\gamma_1,\gamma_2\leq 1$ and for any $(t_1,t_2),(x_1,x_2)\in I\times I$.
\begin{theorem}
  Let $f\in Lip_M(\gamma_1,\gamma_2)$ and $(q_{n_i}),(p_{n_i})$, $i=1,2$ are defined in \ref{eq2}. Then for all $(x_1,x_2)\in I\times I$, we have
\begin{eqnarray*}
 |S_{n_{12},l}^{\alpha_{12}\beta_{12}}(g;x_1,x_2)-f(x_1,x_2)|\leq M \delta_{n_1}^{\gamma_1/2}(x_1)\delta_{n_2}^{\gamma_2/2}(x_2)
\end{eqnarray*}
where $\delta_{n_i}(x_i)=S_{n_{12},l}^{\alpha_{12}\beta_{12}}(((t_i-x_i)^2);p_{12},q_{12};x_1,x_2)$.
\end{theorem}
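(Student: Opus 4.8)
The plan is to combine positivity, the product (separable) structure of the operator, the Lipschitz hypothesis, and Hölder's inequality for positive linear functionals.

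First I would use that $S_{n_{12},l}^{\alpha_{12}\beta_{12}}$ is linear and positive and that $S_{n_{12},l}^{\alpha_{12}\beta_{12}}(e_{0,0};p_{12},q_{12};x_1,x_2)=1$ by Lemma \ref{lem1.1}, to write
\[
|S_{n_{12},l}^{\alpha_{12}\beta_{12}}(f;p_{12},q_{12};x_1,x_2)-f(x_1,x_2)|\le S_{n_{12},l}^{\alpha_{12}\beta_{12}}\big(|f(t_1,t_2)-f(x_1,x_2)|;p_{12},q_{12};x_1,x_2\big).
\]
Inserting the Lipschitz bound $|f(t_1,t_2)-f(x_1,x_2)|\le M|t_1-x_1|^{\gamma_1}|t_2-x_2|^{\gamma_2}$ and invoking the factorization of the bivariate kernel into the two univariate Schurer–Stancu kernels $s_{n_1,l,\nu_1}^{p_1,q_1}(x_1)s_{n_2,l,\nu_2}^{p_2,q_2}(x_2)$ (the same splitting already used in the proof of Lemma \ref{lem1.1}), the double sum separates, so the right-hand side equals
\[
M\, S_{n_1,l}^{\alpha_1,\beta_1}\big(|t_1-x_1|^{\gamma_1};p_1,q_1;x_1\big)\, S_{n_2,l}^{\alpha_2,\beta_2}\big(|t_2-x_2|^{\gamma_2};p_2,q_2;x_2\big).
\]

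Next, to each univariate factor I would apply Hölder's inequality for the positive linear functional $S_{n_i,l}^{\alpha_i,\beta_i}(\cdot;p_i,q_i;x_i)$ with conjugate exponents $\tfrac{2}{\gamma_i}$ and $\tfrac{2}{2-\gamma_i}$, obtaining
\[
S_{n_i,l}^{\alpha_i,\beta_i}\big(|t_i-x_i|^{\gamma_i};p_i,q_i;x_i\big)\le \Big(S_{n_i,l}^{\alpha_i,\beta_i}\big((t_i-x_i)^2;p_i,q_i;x_i\big)\Big)^{\gamma_i/2}\Big(S_{n_i,l}^{\alpha_i,\beta_i}\big(e_0;p_i,q_i;x_i\big)\Big)^{(2-\gamma_i)/2}=\delta_{n_i}^{\gamma_i/2}(x_i),
\]
where the last equality uses $S_{n_i,l}^{\alpha_i,\beta_i}(e_0;p_i,q_i;x_i)=1$ from Lemma \ref{lem1.1}. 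Multiplying the two estimates yields the claimed bound $M\,\delta_{n_1}^{\gamma_1/2}(x_1)\,\delta_{n_2}^{\gamma_2/2}(x_2)$.

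The only point requiring care is the legitimacy of the two reductions: the separation of the double sum for the separable integrand (immediate from the product form of the kernel) and the use of Hölder's inequality for a positive linear operator rather than an ordinary integral (standard, but worth stating explicitly). No additional estimate on $\delta_{n_i}(x_i)$ is needed, since the conclusion is phrased directly in terms of the second-moment quantities computed in Lemma \ref{lem2.2}; convergence to zero then follows from \ref{eq2} together with Lemma \ref{lem2.2}, though that is not part of the present statement.
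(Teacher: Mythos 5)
Your proposal is correct and follows essentially the same route as the paper: bound $|S_{n_{12},l}^{\alpha_{12}\beta_{12}}f - f|$ by the operator applied to $M|t_1-x_1|^{\gamma_1}|t_2-x_2|^{\gamma_2}$, exploit the product structure of the kernel to split into the two univariate factors, and apply H\"older's inequality with exponents $\tfrac{2}{\gamma_i}$ and $\tfrac{2}{2-\gamma_i}$ together with $S(e_{0,0})=1$. Your write-up is in fact slightly more careful than the paper's (which contains typos in the H\"older exponents and keeps the bivariate notation throughout), but no new idea is involved.
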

\begin{proof}
  Since $f\in Lip_M(\gamma_1,\gamma_2)$, we can write
  \begin{eqnarray*}
  |S_{n_{12},l}^{\alpha_{12}\beta_{12}}(f;q_{n_{12}},p_{n_{12}};x_1,x_2)-f(x_1,x_2)|&\leq& S_{n_{12},l}^{\alpha_{12}\beta_{12}}(|f(t_1,t_2)-f(x_1,x_2)|;q_{n_{12}},p_{n_{12}};x_1,x_2)\\
  &\leq& MS_{n_{12},l}^{\alpha_{12}\beta_{12}}(|t_1-x_1|^{\gamma_1}|t_2-x_2|^{\gamma_2};q_{n_{12}},p_{n_{12}};x_1,x_2)\\
  &=&M S_{n_{12},l}^{\alpha_{12}\beta_{12}}(|t_1-x_1|^{\gamma_1};q_{n_{12}},p_{n_{12}};x_1,x_2)S_{n_{12},l}^{\alpha_{12}\beta_{12}}(|t_2-x_2|^{\gamma_2};q_{n_{12}},p_{n_{12}};x_1,x_2).
  \end{eqnarray*}
  Next, we use the Holder inequality with $p=\frac{2}{\gamma_1}$, $q=\frac{2}{2-\gamma_1}$ and $p=\frac{2}{\gamma_1}$, $q=\frac{2}{2-\gamma_2}$, respectively, we have
  \begin{eqnarray*}
  |S_{n_{12},l}^{\alpha_{12}\beta_{12}}(f;q_{n_{12}},p_{n_{12}};x_1,x_2)-f(x_1,x_2)|&\leq&\big\{S_{n_{12},l}^{\alpha_{12}\beta_{12}}((t_1-x_1)^2;q_{n_{12}},p_{n_{12}};x_1,x_2)^{\frac{{\gamma_1}}{2}}\big\{S_{n_{12},l}^{\alpha_{12}\beta_{12}}((1;q_{n_{12}},p_{n_{12}};x_1,x_2)\big\}^{\frac{2}{2-\gamma_1}}\\
  &\times & \big\{S_{n_{12},l}^{\alpha_{12}\beta_{12}}((t_2-x_2)^2;q_{n_{12}},p_{n_{12}};x_1,x_2)\big\}^{\frac{{\gamma_2}}{2}}\big\{S_{n_{12},l}^{\alpha_{12}\beta_{12}}((1;q_{n_{12}},p_{n_{12}};x_1,x_2)\big\}^{\frac{2}{2-\gamma_2}}\\
  &=&M\delta_{n_1}^{\gamma_1/2}(x_1)\delta_{n_2}^{\gamma_2/2}(x_2).
  \end{eqnarray*}
\end{proof}

\section*{References}

\end{document}